\newtheorem{theorem}{Theorem}[section]
\newtheorem{lemma}[theorem]{Lemma}
\newtheorem{corollary}[theorem]{Corollary}
\newtheorem{main}{Theorem}
\newcommand{\centerof}[1]{\mathbf{Z}(#1)}
\newcommand{\centralizer}[2]{\mathbf{C}_{#1}(#2)}
\newcommand{\id}{\{1\}}
\begin{document}

{\large \begin{center}
\textbf{\textsf{THE CYCLIC GRAPH (DELETED ENHANCED POWER GRAPH) OF A DIRECT PRODUCT}}
\end{center}}

\begin{center}
\textbf{\textsf{DAVID G. COSTANZO, MARK L. LEWIS, STEFANO SCHMIDT, EYOB TSEGAYE, GABE UDELL}}
\end{center}

\begin{abstract}
Let $G$ be a finite group.
Define a graph on the set ${G^{\#}=G\setminus\id}$ by declaring distinct elements $x,y\in G^{\#}$ to be adjacent if and only if $\langle x,y\rangle$ is cyclic.
Denote this graph by $\Delta(G)$.
The graph $\Delta(G)$ has appeared in the literature under the names \textit{cyclic graph} and \textit{deleted enhanced power graph}. 
If $G$ and $H$ are nontrivial groups, then $\Delta(G\times H)$ is completely characterized.
In particular, if $\Delta(G\times H)$ is connected, then a diameter bound is obtained, along with an example meeting this bound.
Also, necessary and sufficient conditions for the disconnectedness of $\Delta(G\times H)$ are established.
\end{abstract}

\section{Introduction}

All groups in this note are finite.
Our definition of a \textit{graph} shall be an ordered pair $(V,E)$, where $V$ is a finite set and $E$ is a set of unordered pairs of elements in $V$.
The elements in $V$ are called \textit{vertices}; the elements in $E$ are called \textit{edges}.
If $\{x,y\}\in E$, then it is customary to write $x\sim y$ and to say that the points $x$ and $y$ are \textit{adjacent}.
Observe that $\sim$ is an irreflexive, symmetric relation.
Graphs are often thought of diagrammatically; the vertex set is represented as an array of points and an edge is a line segment (or arc) connecting distinct points.
If $\Gamma$ is a graph, then $c(\Gamma)$ denotes the number of connected components of $\Gamma$.
 
Let $G$ be a group.
Define a graph $\Delta(G)$ on $G$ as follows.
Let $G^{\#}=G\setminus\id$ be the vertex set, and let $\{x,y\}$ ($x$ and $y$ distinct elements in $G^{\#}$) be an edge if and only if $\langle x,y\rangle$ is cyclic.
The graph $\Delta(G)$ is called the \textit{cyclic graph} of $G$.
If, for distinct elements $x,y\in G^{\#}$, the subgroup $\langle x,y\rangle$ is cyclic, then write $x\sim y$.

The cyclic graph of a group has appeared in the literature previously.
In \cite{imp}, Imperatore considered both finite and infinite groups satisfying the condition that the relation $\sim$ is transitive.
(Notice that this condition is equivalent to assuming that each connected component of the cyclic graph is complete.)
More specifically, Imperatore combined the transitivity of the relation $\sim$ with group-theoretic conditions such as abelian, nilpotent, and supersolvable to obtain strong classification results.

Imperatore and the second author continued to study the structural implications of the assumption that the relation $\sim$ is transitive in \cite{implewis}.
In particular, they obtained a complete classification of \textit{finite} groups satisfying this assumption.

The cyclic graph has also appeared in the literature under the name \textit{deleted enhanced power graph}.
Bera and Bhuniya (\cite{bera}, Section 5) study the deleted enhanced power graph of a group $G$, and a few of the lemmas below recover similar results.
In \cite{cameron}, the \textit{enhanced power graph}, which is our cyclic graph with the identity element \textit{included} as a vertex, is studied as a sort of layer between the power graph and the commuting graph of a group.

Let $G$ and $H$ be nontrivial groups.
The primary focus in this note is the graph $\Delta(G\times H)$.
It turns out that merely assuming the connectedness of the cyclic graph of a direct product leads to a diameter bound.

\begin{main}
If $G$ and $H$ are nontrivial groups and the graph $\Delta(G\times H)$ is connected, then $\textnormal{diam}(\Delta(G\times H))\le 7$.
\end{main}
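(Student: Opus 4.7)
I propose a \emph{hub-reduction} strategy. Call a vertex of the form $(g, 1)$ with $g \in G^{\#}$ or $(1, h)$ with $h \in H^{\#}$ a hub. The aim is to show (i) every vertex lies within distance at most $2$ of some hub, and (ii) any two hubs lie within distance at most $3$ of one another. Summing yields $\textnormal{diam}(\Delta(G \times H)) \le 2 + 3 + 2 = 7$.

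For (i), the basic tool is that any two distinct non-identity elements of a common cyclic subgroup of $G \times H$ are adjacent in $\Delta$. So if $v = (a, b)$ has both $a$ and $b$ nontrivial and $|a| \nmid |b|$, then $(a, b)^{|b|} = (a^{|b|}, 1)$ is a nontrivial element of $\langle (a, b) \rangle$ distinct from $v$; this is a hub adjacent to $v$, giving distance $1$. Symmetrically if $|b| \nmid |a|$. The only outstanding situation is the \emph{balanced} case $|a| = |b| = n > 1$, which I expect to be the principal obstacle. In that case I would appeal to connectedness: $v$ is not isolated, so it has some neighbor $w \neq v$, and $\langle v, w \rangle$ is cyclic. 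If a cyclic overgroup of $\langle v \rangle$ admits a generator $(c, d)$ with $|c| \neq |d|$, then $v \sim (c, d)$ and the easy case applies to $(c, d)$, placing $v$ within distance $2$ of a hub. The difficult subcase is when every cyclic overgroup of $\langle v \rangle$ has a generator with equal-order coordinate projections; handling this requires a structural argument exploiting the independence of the factors $G$ and $H$ together with the connectedness hypothesis to locate, within two edges of $v$, some unbalanced overgroup.

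For (ii), two G-hubs $(g, 1)$ and $(g', 1)$ are joined through an H-hub $(1, h)$ as soon as $\gcd(|h|, |g| \cdot |g'|) = 1$, giving distance $2$; analogous intermediates handle mixed pairs and two H-hubs. When no such $h \in H^{\#}$ exists, connectedness of $\Delta(G \times H)$ is invoked to build a chain of at most three edges through auxiliary hubs. Combining the vertex-to-hub and hub-to-hub estimates then delivers the claimed bound $\textnormal{diam}(\Delta(G \times H)) \le 7$.
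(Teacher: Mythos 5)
Your skeleton---route every vertex to a ``hub'' of the form $(g,1)$ or $(1,h)$ in at most $2$ steps, join hubs in at most $3$ steps, and add $2+3+2=7$---matches the shape of the paper's worst-case path $x\sim x^{s}\sim x_{0}\sim(1,b)\sim(c,1)\sim y_{0}\sim y^{t}\sim y$. But both of your key claims are deferred precisely at the hard cases, and the tool needed to close them is missing. The essential ingredient is a \emph{local} centralizer condition extracted from the global hypothesis of connectedness: the paper first reduces to $\textnormal{diam}>6$, deduces $\centerof{G\times H}=\id$ (Lemma \ref{graphconnectedbigdiameter}), hence that $G\times H$ is not of prime power order and fails Condition $(\mathcal{C}_{0})$; consequently every element of prime order $p$ has centralizer divisible by a second prime $r$, and since $\centralizer{G\times H}{(a,b)}=\centralizer{G}{a}\times\centralizer{H}{b}$, one obtains an $r$-element hub $x_{0}=(a',1)$ or $(1,b')$ adjacent simultaneously to $(a,b)$ and to both coordinate hubs $(a,1)$ and $(1,b)$. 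Nothing in your proposal plays this role.

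Concretely, your ``balanced case'' of (i) cannot be finished by inspecting generators of cyclic overgroups of $\langle v\rangle$: if $\langle v\rangle$ is a maximal cyclic subgroup and $v=(a,b)$ with $o(a)=o(b)$, then every generator of every cyclic overgroup is again balanced and every power of $v$ has both coordinates nontrivial, so your method produces no hub at all; the centralizer condition is what supplies the neighbor $x_{0}$ after passing to a prime-order power of $v$. Likewise, in (ii) the element $h\in H^{\#}$ with $\gcd(o(h),o(g)o(g'))=1$ need not exist (e.g.\ when every prime dividing $|H|$ divides $o(g)o(g')$), and for hubs of arbitrary (non-prime) order the detour through prime-order powers already costs extra edges, so the claimed bound of $3$ between arbitrary hubs is not justified and may fail; the paper avoids this by only ever connecting \emph{prime-order} hubs produced by the centralizer argument, and by a case analysis on which coordinates of $x^{s}=(a,b)$ and $y^{t}=(c,d)$ are trivial and whether $p=q$. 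As written, the proposal identifies the right target but leaves the two steps that constitute the actual proof as acknowledged gaps, with proposed methods that do not go through.
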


The direct product $G\times H$, where $G$ is SmallGroup(1944,2320) in the GAP Small Groups library and $H$ is the Frobenius group $(C_{9}\times C_{9})\rtimes C_{4}$, constitutes an example where $\textnormal{diam}(\Delta(G\times H))= 7$.

Let $G$ and $H$ be groups where $\Delta(G)$ and $\Delta(H)$ are disconnected.
The graph $\Delta(G\times H)$ may be connected, possibly suggesting that there exists no relationship between the connectedness of $\Delta(G\times H)$ and that of $\Delta(G)$ or $\Delta(H)$. 
There are scenarios, however, where the connectedness of $\Delta(G\times H)$ does influence $\Delta(G)$ or $\Delta(H)$.

\begin{main}
If $G$ and $H$ are groups and the graph $\Delta(G\times H)$ is connected with $\textnormal{diam}(\Delta(G\times H))\le 2$, then $\Delta(G)$ and $\Delta(H)$ are connected with $$\textnormal{diam}(\Delta(G))\le 2\qquad\hbox{or}\qquad\textnormal{diam}(\Delta(H))\le 2.$$
\end{main}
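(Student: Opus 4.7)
My plan is to argue the contrapositive: assume that neither of $\Delta(G)$, $\Delta(H)$ is connected with diameter at most $2$, and exhibit a pair of vertices of $\Delta(G\times H)$ at distance strictly greater than $2$.  Choose $a,a'\in G^{\#}$ with $d_{\Delta(G)}(a,a')>2$ (interpreting the distance as $\infty$ when the graph is disconnected), and analogously pick $b,b'\in H^{\#}$ with $d_{\Delta(H)}(b,b')>2$.  I then aim to show $d_{\Delta(G\times H)}\bigl((a,b),(a',b')\bigr)>2$, which contradicts the hypothesis and completes the argument.

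Non-adjacency is immediate: if $\langle (a,b),(a',b')\rangle$ were cyclic then its image under the first projection would be the cyclic subgroup $\langle a,a'\rangle\le G$, forcing the forbidden edge $a\sim a'$ in $\Delta(G)$.  The heart of the argument is to rule out a common neighbor $(u,v)\in(G\times H)^{\#}$.  Projecting the two cyclic subgroups $\langle(a,b),(u,v)\rangle$ and $\langle(u,v),(a',b')\rangle$ to $G$ shows that both $\langle a,u\rangle$ and $\langle u,a'\rangle$ are cyclic in $G$.  A case analysis on $u$ follows: if $u\notin\{1,a,a'\}$ then $a\sim u\sim a'$ is a genuine $2$-path in $\Delta(G)$, violating $d_{\Delta(G)}(a,a')>2$; if $u=a$ (or symmetrically $u=a'$) then $\langle u,a'\rangle=\langle a,a'\rangle$ is cyclic, producing the forbidden edge $a\sim a'$.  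Hence $u=1$.  Running the identical analysis on the $H$-projection forces $v=1$, but then $(u,v)=(1,1)$ is not a vertex of $\Delta(G\times H)$, the desired contradiction.

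The main obstacle I anticipate is purely bookkeeping: each time a projection is cyclic, one must check that the two endpoints are nonidentity and mutually distinct so that the resulting adjacency is an honest edge.  The hypotheses $a\ne a'$ (forced by $d>2$), $b\ne b'$, and the exclusion of $u$ from $\{a,a'\}$ (and of $v$ from $\{b,b'\}$) are exactly what make this clean.  Conceptually, any length-$2$ bridge in $\Delta(G\times H)$ between $(a,b)$ and $(a',b')$ must rescue the missing adjacency in at least one coordinate, and the combined hypothesis on $\Delta(G)$ and $\Delta(H)$ leaves no coordinate in which to carry out a rescue.
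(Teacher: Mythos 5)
Your proposal is correct and follows essentially the same route as the paper: argue by contraposition, choose pairs at distance greater than $2$ in each factor, and project a path of length at most $2$ in $\Delta(G\times H)$ down to one of the factors to obtain a forbidden short path. Your treatment is in fact slightly more careful than the paper's, which glosses over the degenerate cases $u\in\{1,a,a'\}$ that you handle explicitly.
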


Let $G$ be a group, and let $p$ be a prime.
If there exists an element in $G$ of order $p$ with a $p$-group centralizer, then $G$ is said to satisfy $(\mathcal{C}_{1}(p))$.
This centralizer condition yields a necessary and sufficient condition for the cyclic graph of a direct product of nontrivial groups to be disconnected.

\begin{main}
Let $G$ and $H$ be nontrivial groups.
The graph $\Delta(G\times H)$ is disconnected if and only if there exists a prime $p$ such that $G$ and $H$ satisfy $(\mathcal{C}_{1}(p))$.
\end{main}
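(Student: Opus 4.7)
The plan is to prove the two directions of the biconditional separately.

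\textbf{Sufficiency $(\Leftarrow)$.} Suppose $g \in G$ and $h \in H$ both have order $p$ with $P = \centralizer{G}{g}$ and $Q = \centralizer{H}{h}$ both $p$-groups. I aim to show that $(g, 1_{H})$ and $(g, h)$ lie in distinct components of $\Delta(G \times H)$. By induction on distance from $(g, h)$ in $\Delta(G \times H)$, I establish that every $(u, v)$ in its component satisfies $(g, h) \in \langle (u, v) \rangle$, which already forces $u \ne 1_{G}$ and $v \ne 1_{H}$. The base case is immediate. For the inductive step, the hypothesis $(g, h) \in \langle (u, v) \rangle$ yields $g \in \langle u \rangle$ and $h \in \langle v \rangle$, so $\centralizer{G}{u} \subseteq P$ and $\centralizer{H}{v} \subseteq Q$. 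If $(x, y) \sim (u, v)$, then $\langle (u, v), (x, y) \rangle = \langle (a, b) \rangle$ with $(a, b) \in \centralizer{G}{u} \times \centralizer{H}{v} \subseteq P \times Q$, so $\langle (a, b) \rangle$ is a cyclic $p$-group. Since $(g, h) \in \langle (u, v) \rangle \subseteq \langle (a, b) \rangle$ has order $p$, it generates the unique minimal subgroup of $\langle (a, b) \rangle$, and this minimal subgroup sits inside the nontrivial cyclic subgroup $\langle (x, y) \rangle$; therefore $(g, h) \in \langle (x, y) \rangle$. This closes the induction, and since $(g, 1_{H})$ has trivial second coordinate it cannot lie in the component of $(g, h)$.

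\textbf{Necessity $(\Rightarrow)$.} Suppose $\Delta(G \times H)$ is disconnected. The main technical obstacle, which I expect to be the hardest step, is producing a component $C$ in which every vertex $(u, v)$ has both $u \ne 1_{G}$ and $v \ne 1_{H}$. The plan for this step is to argue contrapositively: if every component met the ``axis'' $G^{\#} \times \{1_{H}\} \cup \{1_{G}\} \times H^{\#}$, then edges of the form $(x, 1) \sim (1, y)$ (available whenever $\gcd(|x|, |y|) = 1$), combined with bridges through mixed elements whose coordinate orders are coprime, would force $\Delta(G \times H)$ to be connected, contradicting the hypothesis.

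Granted such a $C$, the remainder is short. First, every $(u, v) \in C$ has $|u| = |v|$: otherwise $|u| \nmid |v|$ say, and then $(u, v)^{|v|} = (u^{|v|}, 1_{H})$ is a nonidentity element of $C$ with trivial second coordinate, a contradiction. Next, fix $(u, v) \in C$, set $n = |u| = |v|$, and choose a prime $p \mid n$; then $(u, v)^{n/p} \in \langle (u, v) \rangle \subseteq C$ is an element $(g, h)$ with $|g| = |h| = p$. Finally, if $\centralizer{G}{g}$ were not a $p$-group, pick $g' \in \centralizer{G}{g}$ of order coprime to $p$ with $g' \ne 1_{G}$; since $g$ and $g'$ commute,
\[
\langle (gg', h), (g, h) \rangle = \langle (g', 1_{H}), (g, h) \rangle = \langle g' \rangle \times \langle (g, h) \rangle
\]
is cyclic of order $p \cdot |g'|$, so $(gg', h) \in C$. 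But $|gg'| = p|g'| \ne p = |h|$ contradicts the equal-orders conclusion, so $\centralizer{G}{g}$ is a $p$-group. Symmetrically $\centralizer{H}{h}$ is a $p$-group, and both $G$ and $H$ satisfy $(\mathcal{C}_{1}(p))$.
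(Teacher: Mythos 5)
Your sufficiency direction is complete and correct, and it is a genuinely different (and arguably more self-contained) route than the paper's: the paper deduces this half from Lemma \ref{c1directfactors} together with a dichotomy --- if $G\times H$ is a $p$-group it cannot be cyclic or generalized quaternion, so Lemma \ref{subgroupsoforderp} applies, and otherwise Condition $(\mathcal{C}_{0})$ applies. Your induction, showing that every vertex of the component of $(g,h)$ has $(g,h)$ as a power because everything in sight lives in the cyclic $p$-group $\langle (a,b)\rangle$, transplants the idea behind Lemma \ref{orderpelements} and the $(\mathcal{C}_{0})$ lemma into the direct product and handles both cases of the dichotomy at once. The closing computation in your necessity direction (equal coordinate orders on an off-axis component, then the $\langle (gg',h),(g,h)\rangle=\langle (g',1)\rangle\times\langle (g,h)\rangle$ trick) is also correct.

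The necessity direction nevertheless has a genuine gap at exactly the step you flag as the main obstacle: the existence of a component $C$ disjoint from the axis is the entire content of this implication, and the contrapositive sketch does not establish it. First, your proposed connecting edges $(x,1)\sim(1,y)$ do not exist at all when $G$ and $H$ are $p$-groups for a common prime $p$, since then $\langle (x,1),(1,y)\rangle\cong C_{p^{a}}\times C_{p^{b}}$ is never cyclic; that case has to be handled by exhibiting the off-axis component of $(g,h)$ directly (which your own sufficiency argument in fact does). Second, and more seriously, the hypothesis ``every component meets the axis'' gives you no direct leverage for joining distinct axis points, or for joining an arbitrary vertex to the axis; carrying this out is precisely the page-long case analysis of Theorem \ref{directproductconnected}, where the negation of $(\mathcal{C}_{1}(p))$ is used over and over to manufacture, inside the centralizer of each prime-order element, a commuting element of a different prime order. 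The paper's proof of the present statement is exactly ``if $(\mathcal{C}_{1}(p))$ fails for every $p$, then the paths built in Theorem \ref{directproductconnected} exist, so $\Delta(G\times H)$ is connected,'' followed by Lemma \ref{c1directfactors}; your two sentences about ``bridges through mixed elements whose coordinate orders are coprime'' gesture at this construction without performing it, and that is where the real difficulty of the theorem lives. To complete your argument you should either prove connectivity in the paper's form (no prime $p$ works $\Rightarrow$ connected) or give an actual proof that disconnectedness forces an off-axis component.
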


Let $G$ and $H$ be nontrivial groups, and let $p$ be a prime.
Note that $G\times H$ satisfies $(\mathcal{C}_{1}(p))$ if and only if $G$ and $H$ satisfy $(\mathcal{C}_{1}(p))$.
So, the previous theorem can also be framed in terms of a centralizer condition on $G\times H$.

This research was conducted during a summer REU at Kent State University with the funding of NSF Grant DMS-1653002.
We thank the NSF for their support.
The first, third, fourth, and fifth authors also thank the faculty and staff at Kent State University for their hospitality.

\section{Preliminaries}


There are many graphs that can be assigned to a group.
A few graphs most relevant to the cyclic graph are now mentioned.

Let $G$ be a group.
The \textit{punctured power graph} of $G$ is the graph that has $G^{\#}$ as its vertex set and has an edge drawn between distinct vertices $x$ and $y$ if and only if $x\in\langle y\rangle$ or $y\in\langle x\rangle$.
Note that the punctured power graph of $G$ is a subgraph of $\Delta(G)$, and so the connectedness of the punctured power graph implies connectedness of $\Delta(G)$.
It is easy to check that the converse holds as well.
If the identity element of $G$ is included as a vertex in the punctured power graph, then this graph is simply called the \textit{power graph} of $G$.
Both the power graph and punctured power graph of a group are considered in \cite{shi}.
Jafari \cite{jafari} also considers the punctured power graph of a group and obtains similar results to the previously mentioned papers.
Several of the following lemmas proved in this section are motivated by results regarding the punctured power graph and yield results for the cyclic graph that are analogous to results proved in \cite{curtin} and \cite{doostabi}.

Let $G$ be a group.
The \textit{commuting graph} of $G$ is the graph that has $G\setminus\centerof{G}$ as its vertex set and has an edge drawn between distinct vertices $x$ and $y$ if and only if $xy=yx$.
We note that there are a number of different formulations for the definition of the commuting graph.
For example, in \cite{aschbacher}, the commuting graph is defined as the graph whose vertex set is the set of subgroups of order $p$ and whose edges are commuting subgroups, where $p$ is a fixed prime.
The commuting graph of a finite solvable group with trivial center was studied in \cite{parker}.
The papers \cite{giudici} and \cite{morgan} are also concerned with the commuting graph of a group. 

Much of our subsequent work depends on the following observation.
Note that this result appears in many graduate algebra and group theory texts, see, for example, Exercise $6$ in \cite{rose}.

\begin{lemma}\label{coprimeandcommute}
Let $G$ be a group, and let $x,y\in G$.
If $x$ and $y$ are commuting elements with coprime orders, then the subgroup $\langle x, y \rangle$ is cyclic.
\end{lemma}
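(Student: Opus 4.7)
The plan is to exhibit an explicit generator of $\langle x,y\rangle$, namely the element $xy$ itself. Let $m=|x|$ and $n=|y|$, and assume $\gcd(m,n)=1$. Since $x$ and $y$ commute, the subgroup $\langle x,y\rangle$ is abelian, and in fact equals $\langle x\rangle\langle y\rangle$.

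First I would pin down the order of $\langle x,y\rangle$. Let $D=\langle x\rangle\cap\langle y\rangle$. By Lagrange's theorem $|D|$ divides both $m$ and $n$, so $|D|=1$. Consequently $|\langle x,y\rangle|=|\langle x\rangle\langle y\rangle|=mn$.

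Next I would compute the order of $xy$. Because $x$ and $y$ commute, $(xy)^{mn}=x^{mn}y^{mn}=1$, so $|xy|$ divides $mn$. Conversely, if $(xy)^k=1$, then $x^k=y^{-k}\in\langle x\rangle\cap\langle y\rangle=\{1\}$, which forces $m\mid k$ and $n\mid k$, and therefore $mn\mid k$ by coprimality. Hence $|xy|=mn$.

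Since $\langle xy\rangle\le\langle x,y\rangle$ and both have order $mn$, we conclude $\langle x,y\rangle=\langle xy\rangle$ is cyclic. There is no real obstacle here; the only subtlety worth being careful about is confirming that coprimality of the orders is used in two distinct places, namely to force the trivial intersection and to pass from $m\mid k$, $n\mid k$ to $mn\mid k$.
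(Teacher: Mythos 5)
Your proof is correct and complete: establishing $|\langle x\rangle\langle y\rangle|=mn$ via the trivial intersection and then showing $o(xy)=mn$ is the standard argument for this classical fact. The paper itself gives no proof, citing it as a textbook exercise (Exercise 6 in Rose's \emph{A Course on Group Theory}), so your write-up supplies exactly the expected argument with the two uses of coprimality correctly identified.
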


Recall that a group $G$ is \textit{nilpotent} if it is the (internal) direct product of its Sylow subgroups.
The structure of $\Delta(G)$ for a nilpotent group $G$ is easily understood.

We need an easy lemma.

\begin{lemma}\label{orderpelements}
If $G$ is a $p$-group for some prime $p$, $x\in G$ with $o(x)=p$, and $\Upsilon$ is the connected component of $\Delta(G)$ containing $x$, then $x\sim y$ for every $y\in\Upsilon$.
\end{lemma}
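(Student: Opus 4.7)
My approach is induction on the graph-theoretic distance from $x$ to $y$ in $\Delta(G)$. For any $y\in\Upsilon$ with $y\ne x$, there exists a path $x=z_{0}\sim z_{1}\sim\cdots\sim z_{n}=y$ for some $n\ge 1$, and I will show by induction on $n$ that $\langle x,y\rangle$ is cyclic. The base case $n=1$ is just the definition of an edge in $\Delta(G)$.

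The structural fact that drives the argument is the well-known uniqueness principle for cyclic $p$-groups: a cyclic $p$-group has a unique subgroup of each order dividing its order, and in particular a unique subgroup of order $p$. Because $G$ is itself a $p$-group, every cyclic subgroup encountered along a path in $\Delta(G)$ is automatically a cyclic $p$-group, so this principle applies throughout.

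For the inductive step, assume the claim for paths of length less than $n$. Applying the inductive hypothesis to the subpath $x=z_{0}\sim\cdots\sim z_{n-1}$ yields $x\sim z_{n-1}$, so $\langle x,z_{n-1}\rangle$ is cyclic. Since $x$ has order $p$, the subgroup $\langle x\rangle$ is the unique subgroup of order $p$ of this cyclic $p$-group, and in particular it is the unique subgroup of order $p$ of $\langle z_{n-1}\rangle$. The final edge $z_{n-1}\sim y$ then says $\langle z_{n-1},y\rangle$ is cyclic; this cyclic $p$-group contains $\langle z_{n-1}\rangle$ and hence contains its unique order-$p$ subgroup $\langle x\rangle$. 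Therefore $x\in\langle z_{n-1},y\rangle$, so $\langle x,y\rangle$ is a subgroup of a cyclic group and is itself cyclic, giving $x\sim y$.

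I expect the only real obstacle is making the inductive step transparent, namely recognizing that the order-$p$ subgroup $\langle x\rangle$ must coincide with the unique order-$p$ subgroup inside both $\langle x,z_{n-1}\rangle$ and $\langle z_{n-1},y\rangle$ via the shared cyclic subgroup $\langle z_{n-1}\rangle$. Once that identification is made, the conclusion falls out from the elementary fact that subgroups of cyclic groups are cyclic, and the induction closes immediately.
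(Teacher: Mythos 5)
Your proof is correct and rests on exactly the same key step as the paper's: from $x\sim z_{n-1}$ one gets $\langle x\rangle\le\langle z_{n-1}\rangle$ via the unique order-$p$ subgroup of a cyclic $p$-group, whence $x\in\langle z_{n-1},y\rangle$ and $\langle x,y\rangle$ is cyclic. The paper packages this as a proof by contradiction on a length-two configuration $x\sim u\sim y$ rather than an explicit induction on path length, but the argument is essentially identical.
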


\begin{proof}
Aiming for a contradiction, suppose that $x$ is \textit{not} adjacent to every point in $\Upsilon$.
Then there exist points $u,y\in\Upsilon$ such that $x\sim u\sim y$ and $x$ is not adjacent to $y$.
Since $x\sim u$, the subgroup $\langle x,u\rangle$ has a unique subgroup of order $p$: namely $\langle x\rangle$.
Note that $\langle u\rangle$ also has a unique subgroup of order $p$.
It follows that $\langle x\rangle\le\langle u\rangle$ as $\langle u\rangle\le\langle x,u\rangle$.
Now $x,y\in\langle u,y\rangle$.
But $\langle u,y\rangle$ is cyclic, and therefore so is $\langle x,y\rangle$, a contradiction.
\end{proof}

The following lemma concerns $\Delta(G)$ for a group $G$ of prime power order.
Theorem 2.5 in \cite{doostabi} establishes the analogous and, in fact, equivalent result for the punctured power graph.
We include the short proof here as it motivates the following work.

\begin{lemma}\label{subgroupsoforderp}
If $G$ is a $p$-group for some prime $p$, then $c(\Delta(G))$ equals the number of subgroups of $G$ with order $p$.
\end{lemma}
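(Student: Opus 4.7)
The plan is to set up a bijection between the connected components of $\Delta(G)$ and the order-$p$ subgroups of $G$.

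First I would show that every component contains at least one element of order $p$. Given a nontrivial $x\in G$ with $o(x)=p^{k}$, the element $x^{p^{k-1}}$ has order $p$ and lies in $\langle x\rangle$, so $\langle x,x^{p^{k-1}}\rangle=\langle x\rangle$ is cyclic and $x\sim x^{p^{k-1}}$. Hence the component of $x$ contains an order-$p$ element. This also shows that every order-$p$ subgroup $P$ is wholly contained in a single component, since its $p-1$ nonidentity elements pairwise generate the cyclic group $P$ and are therefore pairwise adjacent.

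Next I would show that two distinct subgroups of order $p$ lie in distinct components. Suppose $P_{1}\neq P_{2}$ are subgroups of $G$ of order $p$, and pick generators $x_{1}\in P_{1}$ and $x_{2}\in P_{2}$. If $x_{1}$ and $x_{2}$ lay in the same component $\Upsilon$, then Lemma \ref{orderpelements} would force $x_{1}\sim x_{2}$, so $\langle x_{1},x_{2}\rangle$ would be a cyclic $p$-group, which has a unique subgroup of order $p$. This would yield $P_{1}=\langle x_{1}\rangle=\langle x_{2}\rangle=P_{2}$, a contradiction.

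With these two observations, I would define $\Phi\colon\{\text{components of }\Delta(G)\}\to\{\text{subgroups of $G$ of order $p$}\}$ by sending a component $\Upsilon$ to the unique order-$p$ subgroup whose nonidentity elements lie in $\Upsilon$. The first paragraph shows every component meets some order-$p$ subgroup (so $\Phi$ is well-defined and surjective), and the second shows distinct order-$p$ subgroups lie in distinct components (so $\Phi$ is injective). Concluding that $c(\Delta(G))$ equals the number of order-$p$ subgroups of $G$ is then immediate. No step here is a serious obstacle: the entire argument is essentially packaging Lemma \ref{orderpelements} together with the elementary fact that cyclic $p$-groups contain a unique subgroup of order $p$.
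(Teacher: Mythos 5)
Your proposal is correct and follows essentially the same route as the paper: both arguments use Lemma \ref{orderpelements} together with the fact that a cyclic $p$-group has a unique subgroup of order $p$ to build a well-defined bijection between components and order-$p$ subgroups. Your write-up just spells out the surjectivity and injectivity that the paper dismisses as ``clear.''
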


\begin{proof}
Let $\Upsilon$ be a connected component of $\Delta(G)$.
The set $\Upsilon$ contains an element of order $p$, say $x$.
Suppose that $y\in\Upsilon$ also has order $p$.
By Lemma \ref{orderpelements}, $x\sim y$.
Note that $\langle x\rangle=\langle y\rangle$ since $\langle x,y\rangle$ has a unique subgroup of order $p$.
Hence $\Upsilon\mapsto\langle x\rangle$ is a well-defined mapping from the connected components of $\Delta(G)$ to the subgroups of $G$ with order $p$.
The bijectivity of this function is clear. 
\end{proof}

Nilpotent groups that are not groups of prime power order are now considered.
Recall that elements with coprime orders commute in nilpotent groups.
(See, for example, Theorem 2.12 in \cite{suzukivolii}.)
Comparing the following lemma with Theorem 2.6 in \cite{doostabi}, we see that the upper bound on the diameter of the cyclic graph of a nilpotent group whose order is divisible by at least two primes is smaller than the diameter of its punctured power graph.

\begin{lemma}\label{nilpotentatleasttwoprimes}
If $G$ is nilpotent and $|G|$ is divisible by at least two primes, then $\Delta(G)$ is connected with $\textnormal{diam}(\Delta(G))\le 3$.
\end{lemma}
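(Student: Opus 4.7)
My plan is to exploit the nilpotent decomposition $G=P_1\times\cdots\times P_k$ (with $k\geq 2$ since $|G|$ has at least two prime divisors) together with Lemma \ref{coprimeandcommute}. For $g\in G^{\#}$ I will write $g=g_1\cdots g_k$ with $g_i\in P_i$, and define $\pi(g)=\{p_i:g_i\neq 1\}\neq\emptyset$. Because elements lying in different direct factors automatically commute, any two nontrivial elements whose Sylow-supports $\pi(\cdot)$ are disjoint will commute and have coprime orders, so by Lemma \ref{coprimeandcommute} they will be adjacent in $\Delta(G)$.

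The key auxiliary observation I will establish first is that every $g\in G^{\#}$ is within distance $1$ of a ``pure'' element lying in a single Sylow subgroup. Indeed, for any $p\in\pi(g)$ the $p$-component $g_p$ is a nontrivial element of $\langle g\rangle$, so $\langle g,g_p\rangle=\langle g\rangle$ is cyclic and hence $g\sim g_p$ whenever $g_p\neq g$ (which happens precisely when $|\pi(g)|\geq 2$, i.e. $g$ is not already pure). The crucial feature is the freedom to choose which prime $p\in\pi(g)$ to project onto, whenever $|\pi(g)|\geq 2$.

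With that in hand, given arbitrary $g,h\in G^{\#}$ I will build a path of length at most $3$ by a short case analysis. If both $g$ and $h$ are non-pure, I choose distinct primes $p\in\pi(g)$ and $q\in\pi(h)$ (possible since $|\pi(g)|,|\pi(h)|\geq 2$, so neither is contained in a single prime) and take the path $g\sim g_p\sim h_q\sim h$, using disjoint Sylow-support at the middle edge. If exactly one, say $g$, is non-pure with $\pi(h)=\{p\}$, I choose $q\in\pi(g)\setminus\{p\}$ and use $g\sim g_q\sim h$ of length $2$. If both $g$ and $h$ are pure and lie in different Sylow subgroups, then $g\sim h$ directly. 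Finally, if both are pure in the same $P_p$, I pick any prime $r\neq p$ in $\pi(G)$ (using $k\geq 2$) and any nontrivial $y\in P_r$ to get $g\sim y\sim h$.

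The proof is essentially bookkeeping, so there is no genuine obstacle; the only point that needs care is verifying in the ``both non-pure'' case that one can always pick $p\in\pi(g)$ and $q\in\pi(h)$ with $p\neq q$, which follows because any single prime cannot contain the whole support of a non-pure element. All edges invoked are justified by a single application of Lemma \ref{coprimeandcommute} together with the commuting of elements from distinct direct factors, and connectedness falls out of the diameter bound.
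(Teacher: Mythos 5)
Your proof is correct and takes essentially the same route as the paper's: both arguments move from $g$ and $h$ to powers of themselves supported on a single prime (your Sylow components $g_p$, the paper's elements $x^{t}$ of prime order) and then join these intermediate vertices using Lemma \ref{coprimeandcommute}, with a case split that differs only in bookkeeping (your ``purity'' of support versus the paper's comparison of the prime divisors of $o(x)$ and $o(y)$). All edges and the distance bound of $3$ check out.
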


\begin{proof}
Let $x,y\in G^{\#}$.
Suppose that there exists a prime $p$ that divides $o(x)$ and does not divide $o(y)$.
Note that $o(x^{t})=p$ for suitable integer $t$, and so $x\sim x^{t}\sim y$.

Assume that every prime divisor of $o(x)$ is a prime divisor of $o(y)$.
Also, assume that every prime divisor of $o(y)$ is a prime divisor of $o(x)$.
(Otherwise, apply the argument in the previous paragraph.)
If $o(x)$ and $o(y)$ are powers of some prime $p$, then, using our hypothesis, fix $a\in G^{\#}$ with $o(a)=q$, where $q\neq p$ is a prime.
Observe that $x\sim a\sim y$.
If $o(x)$ and $o(y)$ are not powers of some prime, then there exist suitable integers $r$ and $s$ and distinct primes $p$ and $q$ such that $o(x^{t})=p$ and $o(y^{s})=q$.
Now, $x\sim x^{t}\sim y^{s}\sim y$.
\end{proof}

Describing a nilpotent group $G$ with $\textnormal{diam}(\Delta(G))=3$ is straightforward.
Let $G$ be a nilpotent group where every Sylow subgroup is neither cyclic nor generalized quaternion.
Write $p_{1},\dots, p_{t}$, $t\ge 2$, for the prime divisors of $|G|$.
Each Sylow $p_{i}$-subgroup of $G$ has at least two subgroups of order $p_{i}$.
Let $\langle x_{i}\rangle$ and $\langle y_{i}\rangle$ be distinct subgroups of order $p_{i}$ for each $i=1,\dots, t$.
Let $x=\prod_{i=1}^{t}x_{i}$ and $y=\prod_{i=1}^{t}y_{i}$.
Note that $x_{1}$ is a power of $x$ and that $y_{1}$ is a power of $y$.
If $x\sim y$, then $\langle x,y\rangle$ has a unique subgroup of order $p_{1}$.
Hence $\langle x_{1}\rangle=\langle y_{1}\rangle$, a contradiction.
Assume there exists $z\in G^{\#}$ with $x\sim z\sim y$.
For some integer $k$, the element $z^{k}$ has order $p_{j}$ for some $j\in\{1,\dots,t\}$.
Note that $x_{j}$ is a power of $x$ and that $y_{j}$ is a power of $y$.
As $x\sim z$ and $z\sim y$, $\langle x_{j}\rangle=\langle z^{k}\rangle=\langle y_{j}\rangle$, another contradiction.
Hence, $d(x,y)=3$.
Finally, we note that if $G$ is a nilpotent group where at least one, but not all, Sylow subgroup is cyclic or generalized quaternion, then $\textnormal{diam}(\Delta(G))=2$.

\section{Centers and Centralizers}

Let $G$ be a group.
In this section, the graph $\Delta(G)$ is characterized under various assumptions on $\centerof{G}$ and on centralizers of elements in $G$.

The structure of $\Delta(G)$ is easily determined when $\centerof{G}$ does \textit{not} have prime power order.
The following lemma recovers essentially Theorem 5.2 in \cite{bera}.
See also Lemma 9 in \cite{shi} for the punctured power graph version of this result.

\begin{lemma}\label{orderofcenterdivisiblebytwoprimes}
If $G$ is a group and $|\centerof{G}|$ is divisible by at least two distinct primes, then $\Delta(G)$ is connected with $\textnormal{diam}(\Delta(G))\le 4$.
\end{lemma}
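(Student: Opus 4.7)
The plan is to choose central elements $a, b \in \centerof{G}$ with $o(a) = p$ and $o(b) = q$ for two distinct primes $p, q$ dividing $|\centerof{G}|$, and then show that every vertex of $\Delta(G)$ lies within graph distance $2$ of $a$. Once this is established, the triangle inequality in the graph metric yields $d(x, y) \le d(x, a) + d(a, y) \le 4$ for all $x, y \in G^{\#}$, giving both connectedness and the stated diameter bound. The main engine is Lemma \ref{coprimeandcommute}: because $a$ and $b$ are central, any adjacency question involving them reduces to coprimality of orders.

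First, since $a$ and $b$ commute and have coprime orders, Lemma \ref{coprimeandcommute} gives $a \sim b$. Now fix an arbitrary $x \in G^{\#}$ and split into three cases according to which of $p, q$ divide $o(x)$. If $p \nmid o(x)$, then $a$ commutes with $x$ and $o(a) = p$ is coprime to $o(x)$, so $x \sim a$ directly. If $p \mid o(x)$ but $q \nmid o(x)$, the symmetric argument yields $x \sim b$, and combining with $a \sim b$ gives $d(x, a) \le 2$. Finally, if both primes divide $o(x)$, let $z = x^{o(x)/q}$, an element of order $q$ in $\langle x \rangle$; since $o(x) \ge pq > q$, we have $z \ne x$, so $x \sim z$. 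Moreover $z$ commutes with $a$ and has order coprime to $p$, so $z \sim a$, giving $d(x, a) \le 2$.

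The three cases are exhaustive, so $d(x, a) \le 2$ for every $x \in G^{\#}$, and the diameter bound follows. There is no significant obstacle here beyond correctly handling the case where both $p$ and $q$ divide $o(x)$; in that scenario one cannot adjoin $a$ to $x$ directly, since $\langle x, a\rangle$ need not be cyclic when $\langle x \rangle$ already contains an order-$p$ subgroup distinct from $\langle a\rangle$. Passing through the auxiliary power $x^{o(x)/q}$ sidesteps this by trading the obstructive prime $p$ for the harmless prime $q$, after which Lemma \ref{coprimeandcommute} applies cleanly.
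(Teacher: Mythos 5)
Your proposal is correct and follows essentially the same strategy as the paper: fix central elements $a,b$ of orders $p,q$, show every vertex lies within distance $2$ of $a$ by passing through either $b$ or a suitable prime-order power of $x$, and conclude $\textnormal{diam}(\Delta(G))\le 4$. The only difference is cosmetic (you split into three cases on which of $p,q$ divide $o(x)$, while the paper splits on whether $o(x)$ has a prime divisor other than $p$), so no further comment is needed.
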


\begin{proof}
Let $p$ and $q$ be distinct prime divisors of $|\centerof{G}|$.
Let $a,b\in\centerof{G}$ with $o(a)=p$ and $o(b)=q$.
Note that $a\sim b$ by Lemma \ref{coprimeandcommute}.

Let $x\in G^{\#}$.
If $o(x)$ is divisible by a prime $r\neq p$, then $o(x^{t})=r$ for a suitable integer $t$.
Now, $x\sim x^{t}\sim a$.
If $x$ is a $p$-element, then $x \sim b$, and so $x\sim b\sim a$.
Hence $d(x,a)\le 2$ for every $x\in G^{\#}$, and the result follows.
\end{proof}

The group $G=C_{6}\times S_{3}\times S_{3}$ shows that $\textnormal{diam}(\Delta(G))=4$ is possible; see Figure 1.

\begin{figure*}[h]
	\centering
	\includegraphics[width=0.65\linewidth]{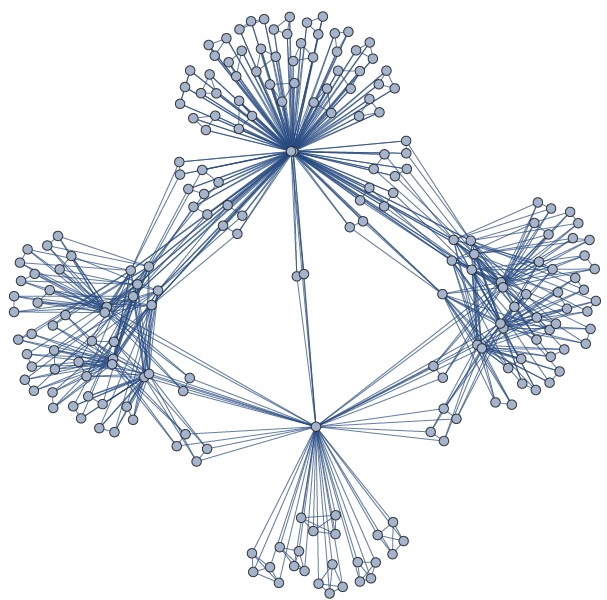}
	\caption{$\Delta(C_{6}\times S_{3}\times S_{3})$}
\end{figure*}

Let $G$ be a group, and assume that $\centerof{G}$ is a $p$-group for some prime $p$.
If $G$ is $p$-group, then $\Delta(G)$ has already been described; see Lemma \ref{subgroupsoforderp}.
So it is no loss to proceed with the assumption $G$ is \textit{not} a group of prime power order.
Let $G$ be a group.
We now introduce two useful conditions on centralizers and we study the consequences of these conditions.
Centralizer conditions for the cyclic graph have been studied briefly; for example, see Theorem 5.3 in \cite{bera}.
Centralizer conditions for the punctured power graph have also been considered; for example, see Lemma 10 in \cite{shi}.

\begin{description}
\item[Condition $(\mathcal{C})$] $\centerof{G}$ is a nontrivial $p$-group for some prime $p$ and $\centralizer{G}{x}$ is \textit{not} a $p$-group for any element $x$ of order $p$.

\item[Condition $(\mathcal{C}_{0})$] $G$ is \textit{not} a group of prime power order and there exists $x\in G$ with prime order, say $p$, such that $\centralizer{G}{x}$ is a $p$-group.
\end{description}

Imposing the condition that $G$ satisfies $(\mathcal{C})$ ensures connectedness of the graph $\Delta(G)$.

\begin{lemma}\label{centralizersnotpgroups}
If $G$ satisfies $(\mathcal{C})$, then $\Delta(G)$ is connected with $\textnormal{diam}(\Delta(G))\le 6$.
\end{lemma}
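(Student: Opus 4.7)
The plan is to exhibit a single ``hub'' vertex $z\in\centerof{G}$ of order $p$ and show that every vertex of $\Delta(G)$ lies within distance $3$ of $z$. The triangle inequality then yields $\textnormal{diam}(\Delta(G))\le 6$, and connectedness comes for free. The element $z$ exists because condition $(\mathcal{C})$ supplies a nontrivial $p$-subgroup $\centerof{G}$, from which we extract an element of order $p$.

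First I would dispose of the easy case. Let $x\in G^{\#}$ with $\gcd(o(x),p)=1$. Since $z\in\centerof{G}$, the elements $x$ and $z$ commute, and their orders are coprime, so Lemma~\ref{coprimeandcommute} gives $\langle x,z\rangle$ cyclic; hence $x\sim z$ and $d(x,z)\le 1$.

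The main case is $p\mid o(x)$. Let $t=o(x)/p$, so $x^{t}$ has order $p$. Condition $(\mathcal{C})$ states that $\centralizer{G}{x^{t}}$ is not a $p$-group, so it contains some element whose order has a prime divisor $q\ne p$; taking an appropriate power, we obtain $y\in\centralizer{G}{x^{t}}$ with $o(y)=q$. Now $x^{t}$ and $y$ commute with coprime orders, so by Lemma~\ref{coprimeandcommute} we get $x^{t}\sim y$. Similarly, $y$ commutes with the central element $z$, and $o(y)=q\ne p=o(z)$, so $y\sim z$. Finally, $x^{t}\in\langle x\rangle$, so either $x=x^{t}$ (when $o(x)=p$) or $x\sim x^{t}$. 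In either case we obtain a walk $x,x^{t},y,z$ of length at most $3$ with distinct vertices (the orders $o(x^{t})=p$, $o(y)=q$, $o(z)=p$ distinguish $x^{t}$ from $y$ and $y$ from $z$, and if $x\ne x^{t}$ then they are adjacent in $\Delta(G)$), whence $d(x,z)\le 3$.

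There is no deep obstacle here; the only thing to be careful about is picking $y$ correctly from $\centralizer{G}{x^{t}}$, and confirming distinctness and adjacency along the path $x,x^{t},y,z$. The argument's structure is the standard ``reduce to a central $p$-hub'' trick, with condition $(\mathcal{C})$ providing precisely the non-$p$-element $y$ needed to bridge between $p$-elements and the hub $z$.
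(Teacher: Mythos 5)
Your proposal is correct and follows essentially the same route as the paper: fix a central $p$-element $z$ as a hub, show $d(x,z)\le 3$ for every vertex $x$ (using Lemma~\ref{coprimeandcommute} for elements of order coprime to $p$, and condition $(\mathcal{C})$ to supply the bridging element $y$ of order $q\ne p$ when $x$ is a $p$-element), then conclude by the triangle inequality. The only differences are cosmetic (your case split on $p\mid o(x)$ versus the paper's choice of a prime-order power of $x$).
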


\begin{proof}
As $G$ satisfies $(\mathcal{C})$, the subgroup $\centerof{G}$ is a nontrivial $p$-group for some prime $p$.
Fix $z\in\centerof{G}^{\#}$.
Let $x\in G^{\#}$.
For suitable integer $t$, $o(x^{t})=q$, where $q$ is a prime.
If $q\neq p$, then $x\sim x^{t}\sim z$.
Assume $q=p$.
Since $\centralizer{G}{x^{t}}$ is not a $p$-group, there exists some $y\in\centralizer{G}{x^{t}}$ with $o(y)=r$, where $r$ is a prime different from $p$.
Now, $x\sim x^{t}\sim y\sim z$.
Hence $d(x,z)\le 3$ for arbitrary $x\in G^{\#}$.
Finally, if $x,y\in G^{\#}$, then a path between $x$ and $y$ of length at most six can be built by passing through $z$.
The result follows.
\end{proof}

The group $G=((C_{29}\times C_{29})\rtimes(C_{15}\rtimes C_{4}))\times C_{2}$, where the left factor is PrimitiveGroup($29^{2}$, $54$) from the groups library in GAP, shows that $\textnormal{diam}(\Delta(G))= 6$ is possible.

Lemma \ref{centralizersnotpgroups} suggests a possible relationship between the existence of a $p$-group centralizer and the disconnectedness of the cyclic graph; this is indeed the case.

\begin{lemma}
If $G$ satisfies $(\mathcal{C}_{0})$, then $\Delta(G)$ is disconnected.
\end{lemma}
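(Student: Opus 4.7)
The plan is to fix an element $x \in G$ of prime order $p$ with $\centralizer{G}{x}$ a $p$-group (guaranteed by condition $(\mathcal{C}_{0})$) and to show that the connected component $\Upsilon$ of $x$ in $\Delta(G)$ is entirely contained in $\centralizer{G}{x}$. Once this is done, the disconnectedness follows immediately: since $G$ is not a $p$-group, Cauchy's theorem supplies an element $y\in G$ of prime order $q\neq p$; such a $y$ is not a $p$-element, hence $y\notin\centralizer{G}{x}$, hence $y\notin\Upsilon$, so $\Delta(G)$ has at least two components.

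To establish $\Upsilon\subseteq\centralizer{G}{x}$, I would mimic the argument of Lemma \ref{orderpelements}. Specifically, I would show by induction on the length $n$ of a path $x=x_{0}\sim x_{1}\sim\cdots\sim x_{n}$ in $\Upsilon$ that either $x=x_{n}$ or $x\sim x_{n}$. The case $n=1$ is immediate. For the inductive step, assume $x\sim x_{n-1}$. Then $\langle x,x_{n-1}\rangle$ is cyclic, and since $x_{n-1}$ commutes with $x$ we have $x_{n-1}\in\centralizer{G}{x}$, which forces $x_{n-1}$ to be a $p$-element. Thus $\langle x,x_{n-1}\rangle$ is a cyclic $p$-group whose unique subgroup of order $p$ is $\langle x\rangle$, and the cyclic $p$-group $\langle x_{n-1}\rangle$ has $\langle x\rangle$ as its unique subgroup of order $p$, giving $\langle x\rangle\le\langle x_{n-1}\rangle$. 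Hence $x\in\langle x_{n-1},x_{n}\rangle$, which is cyclic by the edge $x_{n-1}\sim x_{n}$, so $\langle x,x_{n}\rangle$ is cyclic and $x\sim x_{n}$ (provided $x\ne x_{n}$). In particular, $x_{n}\in\centralizer{G}{x}$.

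There is no real obstacle beyond recognizing that the $p$-group hypothesis on $\centralizer{G}{x}$ supplies exactly the ingredient missing from the setting of Lemma \ref{orderpelements}: every neighbor of $x$ in $\Delta(G)$ is automatically a $p$-element, so the unique-subgroup-of-order-$p$ argument propagates along the path. The proof is thus a modest generalization of Lemma \ref{orderpelements} combined with Cauchy's theorem.
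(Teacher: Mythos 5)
Your proof is correct and follows essentially the same route as the paper: both arguments show that adjacency to $x$ propagates along any path out of $x$ via the unique-subgroup-of-order-$p$ trick (so the whole component of $x$ lies in the $p$-group $\centralizer{G}{x}$), and then conclude by producing an element of order coprime to $p$ outside that component. The only difference is cosmetic: the paper localizes to $\centralizer{G}{x}$ and cites Lemma \ref{orderpelements} to compare the two connected components, whereas you re-derive the propagation step directly by induction on path length.
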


\begin{proof}
Our hypothesis delivers an element $x\in G$ with $o(x)=p$, a prime, such that $\centralizer{G}{x}$ is a $p$-group.
Let $\Upsilon$ be the connected component of $\Delta(\centralizer{G}{x})$ containing $x$, and let $\Xi$ be the connected component of $\Delta(G)$ containing $x$.
Clearly $\Upsilon\subseteq\Xi$.

If $\Xi\nsubseteq\Upsilon$, then there exist elements $g\in\Xi\setminus\Upsilon$ and $y\in\Upsilon$ such that $g\sim y$.
Note that $x\sim y$ by applying Lemma \ref{orderpelements} in $\centralizer{G}{x}$.
But now $x=y^{t}$ some integer $t$.
As $g\in\centralizer{G}{y}\le\centralizer{G}{y^{t}}$, it folllows that $g\in\Upsilon$, a contradiction.
Hence $\Xi\subseteq\Upsilon$.
Conclusion: $\Xi=\Upsilon$.

Every element in $\Xi$ is a $p$-element.
Since $G$ is not a $p$-group, the set $G^{\#}\setminus\Xi$ is nonempty, establishing the disconnectedness of $\Delta(G)$.
\end{proof}

The approach thus far has been to place hypotheses on a group $G$ and study $\Delta(G)$.
But with the previous results in place, information about a group can be deduced based on a description of its cyclic graph.
The following lemma is a result in this direction; but more importantly, Lemma \ref{graphconnectedbigdiameter} will serve as a crucial reduction result in the proof of Theorem \ref{directproductconnected}.

\begin{lemma}\label{graphconnectedbigdiameter}
If $G$ is a group, $\Delta(G)$ is connected, and $\textnormal{diam}(\Delta(G))>6$, then $\centerof{G}=\id$.
\end{lemma}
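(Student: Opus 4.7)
The strategy is to prove the contrapositive: if $\centerof{G}$ is nontrivial, then $\Delta(G)$ is either disconnected or has diameter at most $6$. I would split into three cases based on the structure of $\centerof{G}$ and of $G$ itself.

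First, if $|\centerof{G}|$ is divisible by at least two distinct primes, then Lemma \ref{orderofcenterdivisiblebytwoprimes} immediately yields $\textnormal{diam}(\Delta(G)) \le 4$, well within the target bound. Second, if $\centerof{G}$ is a nontrivial $p$-group and $G$ is itself a $p$-group, then connectedness of $\Delta(G)$ together with Lemma \ref{subgroupsoforderp} forces $G$ to contain a unique subgroup of order $p$, say $\langle x \rangle$. Lemma \ref{orderpelements} then shows $x$ is adjacent to every other vertex, so any two vertices are joined by a path of length at most $2$ through $x$, giving $\textnormal{diam}(\Delta(G)) \le 2$.

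The substantive case is when $\centerof{G}$ is a nontrivial $p$-group but $G$ is not a $p$-group. Here the plan is to argue that $G$ must satisfy either $(\mathcal{C})$ or $(\mathcal{C}_{0})$. If there exists an element $y$ of order $p$ with $\centralizer{G}{y}$ a $p$-group, then since $G$ is not a group of prime power order, condition $(\mathcal{C}_{0})$ holds, and the lemma preceding the statement renders $\Delta(G)$ disconnected---contradicting our hypothesis. Otherwise, no element of order $p$ has a $p$-group centralizer; combined with $\centerof{G}$ being a nontrivial $p$-group, this is precisely condition $(\mathcal{C})$, so Lemma \ref{centralizersnotpgroups} delivers $\textnormal{diam}(\Delta(G)) \le 6$.

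The argument is largely bookkeeping across the three cases, and I expect no serious obstacle. The only subtle point is recognizing that the relevant dichotomy---$(\mathcal{C})$ versus $(\mathcal{C}_{0})$---is governed entirely by the centralizers of elements of order $p$, where $p$ is the unique prime dividing $|\centerof{G}|$; this is exactly what makes $(\mathcal{C})$ and $(\mathcal{C}_{0})$ complementary once $\centerof{G}$ is known to be a nontrivial $p$-group and $G$ is not a $p$-group.
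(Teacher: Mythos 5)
Your proposal is correct and follows essentially the same route as the paper: reduce via Lemma \ref{orderofcenterdivisiblebytwoprimes} to the case where $\centerof{G}$ is a $p$-group, dispose of the prime-power case with Lemma \ref{subgroupsoforderp}, and then observe that a nontrivial center forces the dichotomy between $(\mathcal{C}_{0})$ (disconnected) and $(\mathcal{C})$ (diameter at most $6$ by Lemma \ref{centralizersnotpgroups}). The only cosmetic differences are your contrapositive framing and your use of Lemma \ref{orderpelements} to bound the diameter directly in the $p$-group case, where the paper instead notes that $G$ would be cyclic or generalized quaternion.
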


\begin{proof}
By Lemma \ref{orderofcenterdivisiblebytwoprimes}, $\centerof{G}$ must be a $p$-group for some prime $p$.
If $G$ is a $p$-group, then Lemma \ref{subgroupsoforderp} yields that $G$ must have a unique subgroup of order $p$, otherwise $\Delta(G)$ would be disconnected.
Hence $G$ is cyclic or generalized quaternion.
But the hypothesis that $\textnormal{diam}(\Delta(G))>6$ rules out both of these scenarios, and so $G$ is \textit{not} a group of prime power order.

Since $\Delta(G)$ is connected, $G$ does not satisfy $(\mathcal{C}_{0})$.
It follows that, for every $x\in G^{\#}$ with prime order, $|\centralizer{G}{x}|$ is divisible by at least two primes.
If $\centerof{G}\neq\id$, then $G$ satisfies $(\mathcal{C})$, and so $\textnormal{diam}(\Delta(G))\leq 6$ by Lemma \ref{centralizersnotpgroups}, a contradiction.
Hence $\centerof{G}=\id$.
\end{proof}

We close this section by summarizing what classes of groups the Conditions $(\mathcal{C})$ and $(\mathcal{C}_{0})$ have covered and by mentioning what classes they have missed.

The graph $\Delta(G)$ for a group $G$ with $|\centerof{G}|$ divisible by at least two distinct prime was handled in Lemma \ref{orderofcenterdivisiblebytwoprimes}.

Assume that $\centerof{G}$ is a nontrivial $p$-group for some prime $p$.
If the whole group $G$ is a $p$-group, then appeal to Lemma \ref{subgroupsoforderp}.
So suppose that $G$ is \textit{not} a $p$-group.
If $G$ fails to satisfy $(\mathcal{C})$, then there exists an element $x$ of order $p$ with a $p$-group centralizer; hence $G$ satisfies $(\mathcal{C}_{0})$ and $\Delta(G)$ is disconnected.

Next, assume that $\centerof{G}=\id$.
Since the case $G=\id$ can be dismissed, the group $G$ is not a group of prime power order.
If $G$ fails to satisfy $(\mathcal{C}_{0})$, then $\centralizer{G}{x}$ is not a group of prime power order whenever the element $x\in G$ has prime order.

\section{Direct Products}

The cyclic graph of a direct product of two groups $G$ and $H$, our primary focus, is considered in this section.
As a rule, $G$ and $H$ are \textit{nontrivial} groups.

\begin{lemma}\label{coprimegroups}
If $G$ and $H$ are groups with coprime orders, then $\Delta(G\times H)$ is connected with $\textnormal{diam}(\Delta(G\times H))\le 3$.
\end{lemma}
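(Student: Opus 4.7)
The plan is to exploit two ``axes'' of the direct product, $G\times\id$ and $\id\times H$, as a universal bridge, since any nontrivial element on one axis is adjacent to any nontrivial element on the other.

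First I would record two basic facts. Fact one: if $g\in G^{\#}$ and $h\in H^{\#}$, then $(g,1)$ and $(1,h)$ commute and have coprime orders (their orders divide $|G|$ and $|H|$ respectively), so Lemma \ref{coprimeandcommute} gives $(g,1)\sim(1,h)$. Fact two: for any $(g,h)\in G\times H$ with $g\neq 1$, the element $(g,1)$ lies in $\langle(g,h)\rangle$. This is a Chinese Remainder Theorem argument: since $\gcd(o(g),o(h))=1$, there is an integer $k$ with $k\equiv 1\pmod{o(g)}$ and $k\equiv 0\pmod{o(h)}$, and then $(g,h)^{k}=(g,1)$. In particular $(g,h)\sim(g,1)$ whenever $g\neq 1$, and symmetrically $(g,h)\sim(1,h)$ whenever $h\neq 1$.

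Next I would show connectedness with the diameter bound by a short case analysis on an arbitrary pair $u=(g_{1},h_{1}),v=(g_{2},h_{2})\in(G\times H)^{\#}$. If $u$ and $v$ both have both coordinates nontrivial, concatenate
\[
(g_{1},h_{1})\sim(g_{1},1)\sim(1,h_{2})\sim(g_{2},h_{2})
\]
using fact two on the outer edges and fact one on the middle edge; this gives distance at most $3$. If exactly one of $u,v$ lies on an axis, the same chain (with one edge collapsed) yields distance at most $2$. If both $u,v$ lie on the same axis, say both in $G\times\id$, then picking any $h\in H^{\#}$ gives $u\sim(1,h)\sim v$ by fact one, again distance at most $2$. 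If they lie on different axes, fact one gives distance $1$.

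I do not anticipate a substantive obstacle here: the only slightly non-obvious ingredient is the CRT observation that $(g,1)$ and $(1,h)$ are powers of $(g,h)$ when $\gcd(|G|,|H|)=1$, which is precisely what lets every element collapse onto the axes without increasing distance.
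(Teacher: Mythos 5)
Your proof is correct and follows essentially the same route as the paper: a length-$3$ path through the two axes $G\times\id$ and $\id\times H$, using coprimality both to make the axes mutually adjacent and (via the CRT observation, which the paper leaves implicit) to drop any element onto an axis in one step. The remaining case analysis matches the paper's as well.
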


\begin{proof}
Let $(a,b),(c,d)\in(G\times H)^{\#}$.
If $a,c\in G^{\#}$ and $b,d\in H^{\#}$, then $(a,b)\sim(a,1)\sim(1,d)\sim(c,d)$.
If $a=c=1$, then fix $g\in G$ and observe that $(1,b)\sim(g,1)\sim(1,d)$.
If $a=1$ and $c\neq 1$, then $(1,b)\sim(c,1)\sim(c,d)$.
By symmetry, all cases have been exhausted.
\end{proof}

Observe that Lemma \ref{nilpotentatleasttwoprimes} is a corollary of the previous result.
So the family of examples constructed immediately following Lemma \ref{nilpotentatleasttwoprimes} constitutes a collection of direct products satisfying the conditions of Lemma \ref{coprimegroups} and having the largest possible diameter.

Let $G$ and $H$ be nontrivial groups.
The connectedness of $\Delta(G\times H)$ comes with a diameter bound.

\begin{theorem}\label{directproductconnected}
If $G$ and $H$ are nontrivial groups and the graph $\Delta(G\times H)$ is connected, then $\textnormal{diam}(\Delta(G\times H))\le 7$.
\end{theorem}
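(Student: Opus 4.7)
Set $K = G \times H$ and assume $\Delta(K)$ is connected. If $\textnormal{diam}(\Delta(K)) \le 6$, there is nothing to prove; so I may assume $\textnormal{diam}(\Delta(K)) > 6$. Applying Lemma \ref{graphconnectedbigdiameter} to $K$ itself then forces $\centerof{K} = \id$, whence $\centerof{G} = \centerof{H} = \id$ via $\centerof{G \times H} = \centerof{G} \times \centerof{H}$. The remainder of the argument combines this centerless structure with connectedness to upgrade the bound to $\le 7$.

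My strategy is to route any pair of vertices through the ``axis'' $X := (G^{\#} \times \id) \cup (\id \times H^{\#})$, exploiting two features. First, for $(g, 1), (1, h) \in X$ one has $(g, 1) \sim (1, h)$ precisely when $\gcd(|g|, |h|) = 1$, giving many cross-factor shortcuts invisible in $\Delta(G)$ or $\Delta(H)$ individually. Second, for $(a, b) \in K^{\#}$ with both coordinates nontrivial and $|a| \nmid |b|$, the power $(a,b)^{|b|} = (a^{|b|}, 1)$ is a nontrivial element of $X$ adjacent to $(a, b)$; symmetrically if $|b| \nmid |a|$, and the case $\gcd(|a|, |b|) = 1$ is subsumed since then $(a, b) \sim (a, 1)$ directly. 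The remaining case is $|a| = |b|$, where $\langle (a,b)\rangle \cap X = \id$ and the naive power trick fails. Here I would pass to $(a, b)^{|a|/p}$ of prime order $p$ and use that $\centralizer{K}{(a,b)^{|a|/p}} = \centralizer{G}{a^{|a|/p}} \times \centralizer{H}{b^{|a|/p}}$ is not a $p$-group (since connectedness of $\Delta(K)$ forbids $K$ from satisfying $(\mathcal{C}_{0})$), producing a commuting element of coprime prime order in $G$ or $H$ whose image $(y,1)$ or $(1,y)$ is a neighbour of $(a,b)^{|a|/p}$ lying in $X$. Thus every vertex of $K^{\#}$ is within distance $2$ of $X$.

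The main obstacle is bounding distances within $X$. Walks on $X$ alternate between $G^{\#} \times \id$ and $\id \times H^{\#}$ via coprime bridges, and such bridges may fail to exist when $|G|$ and $|H|$ have identical prime supports that are exhausted by the orders of the endpoints. Handling this requires either longer alternating walks inside $X$ or short excursions outside $X$ through elements of the form $(g, h)$ with carefully chosen orders; the centerless hypothesis together with the absence of a common $(\mathcal{C}_{1}(p))$ prime (again from connectedness of $\Delta(K)$) should supply the needed intermediate elements of prime order and coprime prime order in the appropriate centralizers. Combining the $\le 2$ approach to $X$ at each endpoint with a diameter bound of $\le 3$ on (the relevant enlargement of) $X$ yields $\textnormal{diam}(\Delta(K)) \le 2 + 3 + 2 = 7$, and this count is tight by the example $(G, H) = (\textnormal{SmallGroup}(1944, 2320),\, (C_{9} \times C_{9}) \rtimes C_{4})$ advertised after Theorem~A.
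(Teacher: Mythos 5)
Your reduction step is exactly the paper's: assume $\textnormal{diam}(\Delta(G\times H))>6$, invoke Lemma \ref{graphconnectedbigdiameter} to force $\centerof{G\times H}=\id$, conclude that neither factor has prime power order and that $(\mathcal{C}_{0})$ fails, and then build short paths through elements supported on a single coordinate. The idea of pushing every vertex to within distance $2$ of the axis $(G^{\#}\times\id)\cup(\id\times H^{\#})$ via prime-order powers and non-$p$-group centralizers is also sound and mirrors the paper's construction of the elements $x_{0}$, $(a,1)$, $(1,b)$.

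However, there is a genuine gap: you never prove the claimed diameter bound of $3$ on ``(the relevant enlargement of)'' the axis, and this is precisely where all the work lies. Your own wording concedes it --- bridges ``may fail to exist,'' the hypotheses ``should supply the needed intermediate elements'' --- but no intermediate elements are actually exhibited. The paper's proof is, in essence, an explicit case analysis replacing this assertion: it splits on whether the primes $p=o(x^{s})$ and $q=o(y^{t})$ are equal and on which coordinates of $x^{s}=(a,b)$ and $y^{t}=(c,d)$ are trivial, and in each case manufactures the connecting element (e.g., an element of prime order $r\neq q$ in $\centralizer{X}{(g,d)}$, or in $\centralizer{X}{(a,d)}$) by again exploiting the failure of $(\mathcal{C}_{0})$. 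Without carrying out that analysis, the bound is not established. There is also a bookkeeping problem with your $2+3+2$ count: your distance-$2$-to-axis argument can land on axis elements of composite order (e.g., $(a^{|b|},1)$), and for such elements a coprime bridge $(1,h)$ need not exist when the prime support of $|H|$ is exhausted by the orders involved; repairing this by first passing to a prime-order power costs an extra edge and breaks the budget. The paper sidesteps this by spending its first edge on $x\sim x^{s}$ with $o(x^{s})$ prime and working with prime-order elements throughout. So the skeleton is right, but the core of the proof is missing.
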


\begin{proof}
Let $X=G\times H$.
If $\textnormal{diam}(\Delta(X))\le 6$, then the conclusion is obtained.
Assume that $\textnormal{diam}(\Delta(X))> 6$.
The hypotheses of Lemma \ref{graphconnectedbigdiameter} are therefore fulfilled, and so $\centerof{X}=\id$.
Hence $X$ is not a group of prime power order.
In particular, as $\centerof{X}=\centerof{G}\times \centerof{H}$, neither $G$ nor $H$ is a group of prime power order.
Finally, note that the connectedness of $\Delta(X)$ implies that $X$ does not satisfy $(\mathcal{C}_{0})$.

Let $x,y\in X^{\#}$.
For suitable integers $s$ and $t$, $o(x^{s})=p$ and $o(y^{t})=q$, where $p$ and $q$ are primes.
Write $x^{s}=(a,b)$ and $y^{t}=(c,d)$.
Since $X$ does not satisfy $(\mathcal{C}_{0})$, $\centralizer{X}{x^{s}}$ is not a $p$-group.
Let $r\neq p$ be a prime divisor of $|\centralizer{X}{x^{s}}|$.
Because $\centralizer{X}{x^{s}}=\centralizer{G}{a}\times\centralizer{H}{b}$, the prime $r$ divides $|\centralizer{G}{a}|$ or $|\centralizer{H}{b}|$. 
If $r$ divides $|\centralizer{G}{a}|$, then there exists $a'\in\centralizer{G}{a}$ with $o(a')=r$.
Let $x_{0}=(a',1)$, and observe that $x^{s}\sim x_{0}\sim (a,1)$ and $x^{s}\sim x_{0}\sim (1,b)$.
(Notation is being slightly abused as one of the points $(a,1)$ and $(1,b)$ may be the identity.)
If $r$ divides $|\centralizer{H}{b}|$, then there exists $b'\in\centralizer{H}{b}$ with $o(b')=r$.
In this case, take $x_{0}=(1,b')$, and note that $x^{s}\sim x_{0}\sim (a,1)$ and $x^{s}\sim x_{0}\sim (1,b)$.
Conclusion: $d(x^{s},e)\le 2$ for $e\in\{(a,1),(1,b)\}$.
As $x^{s}\neq 1$, the set $\{(a,1),(1,b)\}$ indeed contains a vertex in $\Delta(X)$.
Applying the same argument to $y^{t}$, there exists $y_{0}\in X$ such that $y^{t}\sim y_{0}\sim f$ for $f\in\{(c,1),(1,d)\}$.

Assume that $p\neq q$.
If $a\neq 1$ and $c=1$, then $$x\sim x^{s}\sim x_{0}\sim (a,1)\sim y^{t}\sim y.$$
If $a=1$ and $c\neq 1$, then $$x\sim x^{s}\sim (c,1)\sim y_{0}\sim y^{t}\sim y.$$

Assume now that $a=1$ and $c=1$.
Because $G$ is not a $p$-group, there exists $g\in G$ with $o(g)=r$, where $r$ is a prime different from $p$.
Note that $x\sim x^{s}\sim (g,1)$.
If $r\neq q$, then $$x\sim x^{s}\sim (g,1)\sim y^{t}\sim y.$$
If $r=q$, then $(g,d)$ has order $q$.
Hence $\centralizer{X}{(g,d)}$ is not a $q$-group.
Let $x_{1}\in\centralizer{X}{(g,d)}$ with prime order different from $q$.
Now, $$x\sim x^{s}\sim (g,1)\sim x_{1}\sim y^{t}\sim y.$$

Finally, assume that $a\neq 1$ and $c\neq 1$.
If $b\neq 1$, then $$x\sim x^{s}\sim x_{0}\sim (1,b)\sim (c,1)\sim y_{0}\sim y^{t}\sim y.$$
If $d\neq 1$, then $$x\sim x^{s}\sim x_{0}\sim (a,1)\sim (1,d)\sim y_{0}\sim y^{t}\sim y.$$
Assume $b=1$ and $d=1$.
Now, proceed as in the previous paragraph to see that $d(x,y)\le 5$.
All cases have been exhausted under the assumption that $p\neq q$.

Assume that $p=q$.
Again, there are four cases to consider: (i) $a\neq 1, c=1$, (ii) $a=1,c\neq 1$, (iii) $a=1,c=1$, and (iv) $a\neq 1$, $c\neq 1$.

Assume that $a\neq 1$ and $c=1$.
The element $(a,d)$ has order $p$, and so $\centralizer{X}{(a,d)}$ is not a $p$-group.
Hence, there exists $x_{1}\in\centralizer{X}{(a,d)}$ with $o(x_{1})=r$, where $r\neq p$ is a prime. 
Now, $$x\sim x^{s}\sim x_{0}\sim (a,1)\sim x_{1}\sim y^{t}\sim y.$$
Cases (i) and, by symmetry, (ii) are now resolved.

Assume that $a=1$ and $c=1$.
Since $G$ is not a $p$-group, there exists $g\in G$ with $o(g)=r$, where $r$ is a prime different from $p$.
Now, $$x\sim x^{s}\sim (g,1)\sim y^{t}\sim y,$$ resolving Case (iii).

Assume that $a\neq 1$, $c\neq 1$.
Recall that the element $y_{0}$ has order $r$, where $r$ is a prime number not equal to $p$.
Moreover, by construction, $y_{0}$ has the form $(c',1)$ or $(1,d')$, where $c'\in\centralizer{G}{c}$, $d'\in\centralizer{H}{d}$.
If $b=1$, then this case is handled by a previous argument.
So assume $b\neq 1$.
Now, $x_{0}$ can be chosen so that $x^{s}\sim x_{0}\sim e$ for $e\in\{(a,1),(1,b)\}$.
If $y_{0}$ has the form $(c',1)$, then let $e=(1,b)$.
If $y_{0}$ has the form $(1,d')$, then let $e=(a,1)$.
Hence, $$x\sim x^{s}\sim x_{0}\sim e\sim  y_{0}\sim y^{t}\sim y,$$ which handles Case (iv).

All cases have been exhausted under the assumption that $p=q$.
The proof is complete.
\end{proof}

Let $G$ be SmallGroup(1944,2320) in the GAP Small Groups library, and let $H$ be the Frobenius group $(C_{9}\times C_{9})\rtimes C_{4}$.
Then $\Delta(G\times H)$ has diameter $7$, and so Theorem \ref{directproductconnected} provides the best possible bound.

Let $G$ be a non-cyclic $p$-group and $H$ be a non-cyclic $q$-group for distinct primes $p$ and $q$.
By Lemma \ref{subgroupsoforderp}, the graphs $\Delta(G)$ and $\Delta(H)$ are disconnected.
Lemma \ref{coprimegroups}, however, says that $\Delta(G\times H)$ is connected---and a diameter bound is given by Theorem \ref{directproductconnected}.
It therefore seems as if the connectedness of $\Delta(G\times H)$ for groups $G$ and $H$ may have little or no impact on the graphs $\Delta(G)$ and $\Delta(H)$.
But when the diameter of $\Delta(G\times H)$ is small enough, some information about the cyclic graphs of $G$ and $H$ can be extracted.

\begin{theorem}
If $G$ and $H$ are groups with $\textnormal{diam}(\Delta(G\times H))\le 2$, then $$\textnormal{diam}(\Delta(G))\le 2\qquad\hbox{or}\qquad\textnormal{diam}(\Delta(H))\le 2.$$
\end{theorem}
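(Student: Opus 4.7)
The natural route is contrapositive. Suppose neither $\Delta(G)$ nor $\Delta(H)$ has diameter $\leq 2$; this means each graph is either disconnected or has a pair of vertices at distance at least $3$. Choose $x_1,x_2\in G^{\#}$ with $d_{\Delta(G)}(x_1,x_2)\geq 3$ (treating ``in different components'' as $\infty$) and $y_1,y_2\in H^{\#}$ with $d_{\Delta(H)}(y_1,y_2)\geq 3$. The plan is to show that the pair $(x_1,y_1)$ and $(x_2,y_2)$ witnesses $\textnormal{diam}(\Delta(G\times H))\geq 3$. The key structural fact is that a cyclic subgroup of $G\times H$ has cyclic image under each coordinate projection; hence, whenever $(u_1,v_1)\sim(u_2,v_2)$ in $\Delta(G\times H)$, the subgroups $\langle u_1,u_2\rangle\leq G$ and $\langle v_1,v_2\rangle\leq H$ are cyclic.

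From this observation the direct adjacency case is immediate: $(x_1,y_1)\sim(x_2,y_2)$ would force $\langle x_1,x_2\rangle$ cyclic, hence $x_1\sim x_2$ in $\Delta(G)$, contradicting $d_{\Delta(G)}(x_1,x_2)\geq 3$. The work, therefore, is to rule out every length-two path $(x_1,y_1)\sim(a,b)\sim(x_2,y_2)$. I would split on the structure of $(a,b)$. If $a=1$, then $b\neq 1$ and the projection to $H$ of each adjacency gives $\langle y_1,b\rangle$ and $\langle b,y_2\rangle$ cyclic; if $b\in\{y_1,y_2\}$ the two witnesses become adjacent in $\Delta(H)$, and otherwise $y_1\sim b\sim y_2$ is a length-two path in $\Delta(H)$, each contradicting $d_{\Delta(H)}(y_1,y_2)\geq 3$. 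The subcase $b=1$ is symmetric in the role of $G$.

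In the remaining situation $a\neq 1$ and $b\neq 1$, the plan is to use the projection to $G$: if $a\in\{x_1,x_2\}$, say $a=x_1$, then $(x_1,b)\sim(x_2,y_2)$ projects to $\langle x_1,x_2\rangle$ cyclic, again contradicting $d_{\Delta(G)}(x_1,x_2)\geq 3$, and the case $a=x_2$ is analogous. If $a\notin\{x_1,x_2\}$, then both projection adjacencies give $x_1\sim a\sim x_2$ in $\Delta(G)$, a length-two path, the final contradiction. I expect the main obstacle to be bookkeeping rather than insight: the awkward corner cases are precisely those where the middle vertex has a coordinate equal to $1$ or to one of the witness coordinates, and care is required to ensure the coincidences $b\in\{y_1,y_2\}$ or $a\in\{x_1,x_2\}$ are not silently passed over, since in those subcases one gets adjacency (rather than a length-two path) in the respective factor, which still contradicts the distance hypothesis.
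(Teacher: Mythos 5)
Your proof is correct and follows essentially the same route as the paper: contrapositive, a pair of witnesses at distance at least $3$ in each factor, and the observation that coordinate projections of cyclic subgroups are cyclic. The only difference is that you spell out the degenerate subcases (a middle vertex with a trivial coordinate, or a coordinate coinciding with a witness), which the paper's one-line conclusion ``$a\sim e\sim c$, a contradiction'' silently absorbs.
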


\begin{proof}
Assume that $3\le\textnormal{diam}(\Delta(G)),\textnormal{diam}(\Delta(H))\le\infty$, and choose $a,c\in G$, $b,d\in H$ such that $d_{\Delta(G)}(a,c),d_{\Delta(H)}(b,d)\ge 3$.
Our hypothesis produces a point $(e,f)\in (G\times H)^{\#}$ satisfying $(a,b)\sim(e,f)\sim(c,d)$.
At least one of the elements $e,f$ is nontrivial, say $e$.
But then $a\sim e\sim c$ in $\Delta(G)$, a contradiction.
\end{proof}

Let $G$ be a group, and consider the following condition:

\begin{description}

\item[Condition $(\mathcal{C}_{1}(p))$] there exists an element in $G$ of prime order $p$ with a $p$-group centralizer.

\end{description}

Note that this condition is Condition $(\mathcal{C}_{0})$ without the hypothesis that $G$ is \textit{not} a $p$-group.
The purpose of isolating this assumption is that it yields a necessary and sufficient condition for the cyclic graph of a direct product of nontrivial groups to be disconnected.

\begin{theorem}\label{disconnectednessc1}
Let $G$ and $H$ be nontrivial groups.
The graph $\Delta(G\times H)$ is disconnected if and only if $G\times H$ satisfies $(\mathcal{C}_{1}(p))$ for some prime $p$.
\end{theorem}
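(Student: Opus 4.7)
The plan is to handle the two implications separately; write $X = G \times H$ throughout.

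For the forward direction, I would assume $X$ satisfies $(\mathcal{C}_{1}(p))$ for some prime $p$ and split on whether $X$ is itself a $p$-group. If so, then both $G$ and $H$ are nontrivial $p$-groups, each contains an element of order $p$ (say $a \in G$ and $b \in H$), and the two subgroups $\langle(a,1)\rangle, \langle(1,b)\rangle$ are distinct of order $p$, forcing $\Delta(X)$ to have at least two components by Lemma \ref{subgroupsoforderp}. If $X$ is not a $p$-group, then $(\mathcal{C}_{1}(p))$ combined with ``$X$ is not a $p$-group'' is literally Condition $(\mathcal{C}_{0})$, and the (unnamed) lemma just before Lemma \ref{graphconnectedbigdiameter} gives disconnectedness.

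For the converse I would argue contrapositively: assume that $X$ satisfies $(\mathcal{C}_{1}(p))$ for no prime $p$, and show $\Delta(X)$ is connected. First, a nontrivial $p$-group trivially satisfies $(\mathcal{C}_{1}(p))$ (every element of order $p$ has a $p$-group centralizer), so $X$ is not a $p$-group; the hypothesis then reads: for every prime $p$ and every element of order $p$ in $X$, the centralizer is not a $p$-group. In particular $X$ does not satisfy $(\mathcal{C}_{0})$. Next I would case-split on $\centerof{X} = \centerof{G}\times\centerof{H}$. If $|\centerof{X}|$ is divisible by two distinct primes, Lemma \ref{orderofcenterdivisiblebytwoprimes} already gives connectedness. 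If $\centerof{X}$ is a nontrivial $p$-group, then $X$ satisfies Condition $(\mathcal{C})$ (its center is a nontrivial $p$-group and no element of order $p$ has a $p$-group centralizer), so connectedness follows from Lemma \ref{centralizersnotpgroups}.

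The remaining case $\centerof{X} = 1$ is the main obstacle: now $\centerof{G} = \centerof{H} = 1$, so neither $G$ nor $H$ is a $p$-group, and there is no central element to route paths through. My plan is to re-use the path-building portion of the proof of Theorem \ref{directproductconnected} directly. That argument, once one inspects it, never genuinely relies on connectedness; it only needs the hypotheses ``$X$ is not a $p$-group'', ``neither $G$ nor $H$ is a $p$-group'', and ``$X$ does not satisfy $(\mathcal{C}_{0})$'' in order to build an explicit path of length at most $7$ between any two vertices. All three hold here, so the construction applies and $\Delta(X)$ is connected. A cleaner exposition might factor this path construction out as its own lemma, to be invoked in both Theorem \ref{directproductconnected} and the present result.
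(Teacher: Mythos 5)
Your proof is correct, and both directions follow the paper's strategy: the forward implication is identical (split on whether $G\times H$ has prime power order, then invoke Lemma \ref{subgroupsoforderp} or the $(\mathcal{C}_{0})$ lemma), and the converse reuses the path construction from Theorem \ref{directproductconnected}. The one genuine difference is in how you justify the converse. The paper disposes of it in one sentence, asserting that ``every element of prime order has a centralizer divisible by two primes'' is the only assumption the path-building needed; but that construction also invokes ``$G$ is not a $p$-group'' (and symmetrically for $H$) in the cases where $x^{s}$ and $y^{t}$ both lie in a single factor, and failure of $(\mathcal{C}_{1}(p))$ for $G\times H$ does \emph{not} force the individual factors to have non-prime-power order (e.g.\ $G=C_{2}\times C_{2}$ and $H=C_{2}\times C_{6}$). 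Your three-way split on $\centerof{G\times H}$ repairs exactly this: when the center is nontrivial you bypass the path construction entirely via Lemmas \ref{orderofcenterdivisiblebytwoprimes} and \ref{centralizersnotpgroups}, and when the center is trivial the nontrivial factors $G$ and $H$ have trivial centers and hence genuinely cannot have prime power order, so every hypothesis the path construction uses is available. So your argument is slightly longer but airtight where the paper's is glossy; your closing suggestion to factor the path construction out as a standalone lemma with its hypotheses made explicit is exactly the right way to present this.
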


\begin{proof}
Assume that $G\times H$ satisfies $(\mathcal{C}_{1}(p))$ for some prime $p$.
If $G\times H$ is a $p$-group, then clearly $G\times H$ cannot be cyclic or generalized quaternion since both $G$ and $H$ are nontrivial.
Hence $\Delta(G\times H)$ is disconnected.
If $G\times H$ is not a group of prime power order, then $G\times H$ satisfies $(\mathcal{C}_{0})$.
So, again, $\Delta(G\times H)$ is disconnected.

If $G\times H$ does not satisfy $(\mathcal{C}_{1}(p))$ for any prime $p$, then every element of prime order has a centralizer with order divisible by at least two distinct primes.
But notice that this assumption is what produced paths in Theorem \ref{directproductconnected}.
Hence $\Delta(G\times H)$ is connected.
\end{proof}

Let $G$ and $H$ be nontrivial groups.
The disconnectedness of $\Delta(G\times H)$ can also be characterized by a centralizer condition on both direct factors $G$ and $H$.
An easy lemma is needed.

\begin{lemma}\label{c1directfactors}
Let $G$ and $H$ be nontrivial groups, and let $p$ be a prime.
The group $G\times H$ satisfies $(\mathcal{C}_{1}(p))$ if and only if $G$ and $H$ satisfy $(\mathcal{C}_{1}(p))$.
\end{lemma}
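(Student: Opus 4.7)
The plan is to prove the biconditional by unpacking $(\mathcal{C}_1(p))$ and using the fact that centralizers in a direct product factor as direct products of centralizers in the factors.

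For the reverse direction, which is the straightforward one, I would assume $G$ satisfies $(\mathcal{C}_1(p))$ via some $a\in G$ and $H$ satisfies $(\mathcal{C}_1(p))$ via some $b\in H$, so that $o(a)=o(b)=p$ and both $\centralizer{G}{a}$ and $\centralizer{H}{b}$ are $p$-groups. Then $(a,b)\in G\times H$ has order $p$, and since $\centralizer{G\times H}{(a,b)}=\centralizer{G}{a}\times\centralizer{H}{b}$ is a direct product of two $p$-groups, it is itself a $p$-group. This shows $G\times H$ satisfies $(\mathcal{C}_1(p))$.

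For the forward direction, I would start with $(a,b)\in G\times H$ of order $p$ whose centralizer $\centralizer{G\times H}{(a,b)}=\centralizer{G}{a}\times\centralizer{H}{b}$ is a $p$-group. Since each factor of a $p$-group direct product is a $p$-group, both $\centralizer{G}{a}$ and $\centralizer{H}{b}$ are $p$-groups. Now the subtle point, which is the main thing one has to be careful about: the element $(a,b)$ has order $p$, which forces $o(a),o(b)\in\{1,p\}$ with at least one equal to $p$, so one of $a,b$ could be trivial. I would split into cases. If both $a$ and $b$ have order $p$, the claim is immediate.

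If instead, say, $a=1$, then $b$ has order $p$ with $p$-group centralizer $\centralizer{H}{b}$, so $H$ satisfies $(\mathcal{C}_1(p))$ directly. For $G$, notice that the centralizer $\centralizer{G\times H}{(1,b)}=G\times\centralizer{H}{b}$ being a $p$-group forces $G$ itself to be a $p$-group, and since $G$ is nontrivial, Cauchy's theorem produces an element $a'\in G$ of order $p$, whose centralizer $\centralizer{G}{a'}$ is a subgroup of the $p$-group $G$ and hence a $p$-group, so $G$ satisfies $(\mathcal{C}_1(p))$. The symmetric case $b=1$ is handled identically using nontriviality of $H$. The main obstacle is really just remembering to use the nontriviality hypotheses on $G$ and $H$ in this case-analysis, since that is what rescues the argument when the witnessing element in the product collapses to the identity in one coordinate.
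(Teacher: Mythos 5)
Your proof is correct and follows essentially the same route as the paper's: both directions use the factorization of centralizers in a direct product, with the same case split in the forward direction according to whether one coordinate of the witnessing element is trivial. You simply make explicit the step the paper leaves implicit, namely that when one factor is forced to be a nontrivial $p$-group, Cauchy's theorem supplies the required element of order $p$ with $p$-group centralizer.
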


\begin{proof}
Let $X=G\times H$.
Assume that $X$ satisfies $(\mathcal{C}_{1}(p))$.
Hence, there exists $x=(g,h)\in X$ with $o(x)=p$, such that $\centralizer{X}{x}$ is a $p$-group.
Note that $o(g),o(h)\in\{1,p\}$ since $1=x^{p}=(g^{p},h^{p})$.
If neither $g$ nor $h$ is the identity element, then $o(g)=o(h)=p$.
As $\centralizer{X}{x}=\centralizer{G}{g}\times\centralizer{H}{h}$, both centralizers $\centralizer{G}{g}$ and $\centralizer{H}{h}$ are $p$-groups, establishing the conclusion.
If $h=1$ (and so $g$ has order $p$), then $H$ is a $p$-group, and so, again, the conclusion follows.
If $g=1$ (and so $h$ has order $p$), then $G$ is a $p$-group.
In all cases, $G$ and $H$ satisfy $(\mathcal{C}_{1}(p))$.

Assume conversely that $G$ and $H$ satisfy $(\mathcal{C}_{1}(p))$.
Our hypothesis therefore delivers an element $g\in G$ with $o(g)=p$ such that $\centralizer{G}{g}$ is $p$-group and an element $h\in H$ with $o(h)=p$ such that $\centralizer{H}{h}$ is $p$-group.
Let $x=(g,h)$, and observe that the existence of $x$ grants $X$ membership into the class of $(\mathcal{C}_{1}(p))$-groups.
\end{proof} 

Theorem \ref{disconnectednessc1} and Lemma \ref{c1directfactors} now yield the following corollary.

\begin{corollary}
Let $G$ and $H$ be nontrivial groups.
The graph $\Delta(G\times H)$ is disconnected if and only if there exists a prime $p$ such that $G$ and $H$ satisfy $(\mathcal{C}_{1}(p))$.
\end{corollary}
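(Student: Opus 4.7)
The plan is essentially to do nothing beyond chaining together the two immediately preceding results. Theorem \ref{disconnectednessc1} gives the equivalence: $\Delta(G \times H)$ is disconnected if and only if $G \times H$ satisfies $(\mathcal{C}_{1}(p))$ for some prime $p$. Lemma \ref{c1directfactors} gives the equivalence: $G \times H$ satisfies $(\mathcal{C}_{1}(p))$ if and only if both $G$ and $H$ satisfy $(\mathcal{C}_{1}(p))$. Transitivity of ``if and only if'' finishes the argument.

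Concretely, I would first quantify carefully over the prime. Starting from the assumption that $\Delta(G \times H)$ is disconnected, Theorem \ref{disconnectednessc1} produces \emph{some} prime $p$ with $G \times H \in (\mathcal{C}_{1}(p))$, and then Lemma \ref{c1directfactors} passes this same $p$ down to both factors, which is exactly the right-hand side of the corollary. Conversely, if there exists a prime $p$ with $G, H \in (\mathcal{C}_{1}(p))$, Lemma \ref{c1directfactors} lifts it to $G \times H \in (\mathcal{C}_{1}(p))$, and Theorem \ref{disconnectednessc1} then yields disconnectedness of $\Delta(G \times H)$.

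There is essentially no obstacle: both implications are one-line citations and the quantifier $\exists\, p$ is preserved in both directions. The only thing worth flagging in the write-up is to state explicitly that the \emph{same} prime $p$ travels through both equivalences, so that the existential quantifier on the right-hand side of the corollary matches the one produced by Theorem \ref{disconnectednessc1}. Since the proof is literally a two-step citation, I would expect the published proof to be at most two or three sentences, which matches what the authors indicate when they say the corollary ``now'' follows from the named theorem and lemma.
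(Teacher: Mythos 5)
Your proposal is correct and matches the paper exactly: the authors give no separate proof, stating only that Theorem \ref{disconnectednessc1} and Lemma \ref{c1directfactors} ``now yield'' the corollary, which is precisely the two-step citation you describe. Your remark about tracking the same prime $p$ through both equivalences is the right (and only) point of care.
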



\begin{thebibliography}{99}

\bibitem{aschbacher}
Michael Aschbacher,
\emph{Simple connectivity of p-group complexes},
Israel J. Math.
\textbf{82} (1993), p. 1--43.

\bibitem{cameron}
G. Aalipour, S. Akbari, P.J. Cameron, R. Nikandish, F. Shaveisi,
\emph{On the structure of the power graph and the enhanced power graph of a group}, 
Electron. J. Combin.
\textbf{24} 3 (2017)

\bibitem{curtin}
Brian Curtin, Gholam Reza Pourgholi, and Hassan Yousefi-Azari,
\emph{On the punctured power graph of a finite group},
Australasian J. Combinatorics
\textbf{62(1)} (2015), p. 1-7.

\bibitem{bera}
Sudip Bera and A.K. Bhuniya,
\emph{On enhanced power graphs of finite groups},
J. Algebra Appl.
\textbf{17}, no. 8, (2018)

\bibitem{magma}
W. Bosma, J. Cannon, and C. Playout,
\emph{The Magma algebra system. I. The user language},
J. Symbolic Comput.
\textbf{24} (1997), no 3--4, 235---265.

\bibitem{doostabi}
A. Doostabadi and M. Farrokhi D. Ghouchan,
\emph{On the connectivity of proper power graphs of finite groups},
Journal of Algebra and Its Applications
\textbf{17} 8 (2018).

\bibitem{giudici}
Michael Giudici and Chris Parker,
\emph{There is no upper bound for the diameter of the commuting graph of a finite group},
J. Combin. Theory Ser. A 
\textbf{120} (2013), no. 7, 1600–1603.

\bibitem{imp}
Diana Imperatore,
\emph{On a graph associated with a group},
Proceeding of the International Conference Ischia Group Theory,
World Scientific Publishing, Singapore, 2008,
pp. 100-115.

\bibitem{implewis}
Diana Imperatore and Mark L. Lewis,
\emph{A condition in finite solvable groups related to cyclic subgroups},
Bull. Aust. Math. Soc.
\textbf{83} (2011), 267--272.

\bibitem{jafari}
S. H. Jafari,
\emph{Some properties of power graphs in finite group},
Asian-European Journal of Mathematics
\textbf{9} 04, (2016)

\bibitem{shi}
A. R. Moghaddamfar, S. Rahbariyan and W. J. Shi,
\emph{Certain properties of the power graph associated with a finite group},
Journal of Algebra and Its Applications
\textbf{13(7)}, (2014).

\bibitem{morgan}
Morgan, G. L., Parker, C. W.,
\emph{The diameter of the commuting graph of a finite group with trivial centre},
J. Algebra 
\textbf{393} (2013), 41--59  

\bibitem{panda}
Ramesh Prasad Panda and K. V. Krishna,
\emph{On connectedness of power graphs of finite groups},
Journal of Algebra and Its Applications
\textbf{17} 10, (2018).

\bibitem{parker}
Christopher Parker,
\emph{The commuting graph of a soluble group},
Bull. London Math Soc.
\text{45} (2013), 839-848.

\bibitem{rose}
John S. Rose,
\emph{A Course on Group Theory},
Dover, New York, 1994.

\bibitem{gap}
The GAP Group, 
\emph{GAP---Groups, Algorithms, and Programming}, 
Version 4.8.8 (2017),
http://www.gap-system.org

\bibitem{suzukivolii}
Michio Suzuki,
\emph{Group Theory Volume II},
Springer, New York, 1986.

\end{thebibliography}
\end{document}